\theoremstyle{plain}
\newtheorem{thm}{Theorem}[section]
\newtheorem{cor}[thm]{Corollary} 
\newtheorem{lemma}[thm]{Lemma} 
\newtheorem{prop}[thm]{Proposition}
\theoremstyle{remark}
\theoremstyle{definition}
\newtheorem{example}[thm]{Example}
\newtheorem{ques}[thm]{Question}
\def\timeHHMM{{\number\theHour:\number\theMinuteTens\number\theMinute}}
\def\today{{\number\day\space
 \ifcase\month\or
  January\or February\or March\or April\or May\or June\or
  July\or August\or September\or October\or November\or December\fi
 \space\number\year}}
\def\timeanddate{{\timeHHMM\space o'clock, \today}}
\newcommand\Ac{{\mathcal{A}}}
\newcommand\Afr{{\mathfrak A}}
\newcommand\alphat{{\tilde\alpha}}
\newcommand\Bh{{\widehat B}}
\newcommand\Ch{{\widehat C}}
\newcommand\Comm{{\operatorname{Comm}}}
\newcommand\Cpx{{\mathbf C}}
\newcommand\diag{\text{\rm diag}}
\newcommand\Fb{{\mathbf F}}
\newcommand\HEu{{\EuScript H}}                   
\newcommand\Ints{{\mathbf Z}}
\newcommand\kerproj{\operatorname{kerproj}}
\newcommand\Mcal{{\mathcal{M}}} 
\newcommand\Nats{{\mathbf N}}
\newcommand\oneh{{\hat 1}}
\newcommand\ranproj{\operatorname{ranproj}}
\newcommand\tr{{\mathrm{tr}}}
\newcommand\Tt{{\widetilde T}}
\newcommand\Vh{{\widehat V}}
\newcommand\Xh{{\widehat X}}
\begin{document}

\title[single commutators]{On single commutators in II$_1$--factors}

\author[Dykema]{Ken Dykema$^{*}$}
\author[Skripka, \timeanddate]{Anna Skripka$^\dag$}
\address{K.D., Department of Mathematics, Texas A\&M University,
College Station, TX 77843-3368, USA}
\email{kdykema@math.tamu.edu}
\address{A.S., Department of Mathematics, University of Central Florida,
4000 Central Florida Blvd., P.O.\ Box 161364, Orlando, FL 32816-1364, USA}
\email{skripka@math.ucf.edu}
\thanks{\footnotesize ${}^{*}$Research supported in part by NSF grant DMS--0901220.
${}^{\dag}$Research supported in part by NSF grant DMS-0900870}

\subjclass[2000]{47B47, 47C15}

\keywords{Commutators, II$_1$--factors}

\date{February 1, 2011}

\begin{abstract}
We investigate the question of whether all elements of trace zero in a II$_1$--factor are
single commutators.
We show that all nilpotent elements are single commutators,
as are all normal elements of trace zero whose spectral distributions are discrete measures.
Some other classes of examples are considered.
\end{abstract}

\maketitle

\section{Introduction}

In an algebra $\Afr$, the {\em commutator} of $B,C\in\Afr$ is $[B,C]=BC-CB$, and we denote by $\Comm(\Afr)\subseteq\Afr$
the set of all commutators.
A {\em trace} on $\Afr$ is by definition a linear functional that vanishes on $\Comm(\Afr)$.
The algebra $M_n(k)$ of $n\times n$ matrices over a field $k$ has a unique trace, up to scalar multiplication;
(we denote the trace sending the identity element to $1$ by $\tr_n$).
It is known that every element of $M_n(k)$
that has null trace is necessarily a commutator (see~\cite{S36} for the case of characteristic zero and \cite{AM57} for the case of an arbitrary characteristic).
For the complex field, $k=\Cpx$, a natural generalization of the algebra $M_n(\Cpx)$
is the algebra $B(\HEu)$ of all bounded operators on a separable, possibly infinite dimensional Hilbert space $\HEu$.
Thanks to the ground breaking paper~\cite{BP65} of Brown and Pearcy,
$\Comm(B(\HEu))$ is known:
the commutators in $B(\HEu)$ are precisely the operators that are not of the form $\lambda I+K$ for $\lambda$
a nonzero complex number, $I$ the identity operator and $K$ a compact operator (and an
analogous result holds when $\HEu$ is nonseparable).

Characterizations of $\Comm(B(X))$ for some Banach spaces $X$ are found
in~\cite{A72}, \cite{A73}, \cite{D09} and~\cite{DJ10}.

The von Neumann algebra factors form a natural family of algebras including the matrix algebras $M_n(\Cpx)$
and $B(\HEu)$ for infinite dimensional Hilbert spaces $\HEu$; (these together are the type~I factors).
The set $\Comm(\Mcal)$ was determined by Brown and Pearcy~\cite{BP66} for $\Mcal$ a factor of type III
and by Halpern~\cite{H69} for $\Mcal$ a factor of type II$_\infty$.

The case of type II$_1$ factors remains open.
A type II$_1$ factor is a von Neumann algebra $\Mcal$ whose center is trivial and that has a trace $\tau:\Mcal\to\Cpx$,
which is then unique up to scalar multiplication;
by convention, we always take $\tau(1)=1$.
The following question seems natural, in light of what is known for matrices:
\begin{ques}\label{qn:comm}
Do we have
\[
\Comm(\Mcal)=\ker\tau
\]
for any one particular II$_1$--factor $\Mcal$, or even for all II$_1$--factors?
\end{ques}

Some partial results are known.
Fack and de la Harpe~\cite{FH80} showed that every element of $\ker\tau$ is a sum of ten commutators,
(and with control of the norms of the elements).
The number ten was improved to two by Marcoux~\cite{M06}.
Pearcy and Topping, in~\cite{PT69}, showed that in the type II$_1$ factors of Wright (which do not have separable predual),
every self--adjoint element of trace zero is a commutator.

In section~\ref{sec:normal}, we employ the construction of Pearcy and Topping for the Wright factors
and a result of Hadwin~\cite{Had98} to show firstly 
that all normal elements of trace zero in the Wright factors are commutators.
We then use this same construction to derive
that in any II$_1$--factor, every normal element with trace zero and purely atomic distribution is a single commutator. 
In section~\ref{sec:nilpotent}, we show that all nilpotent operators in II$_1$--factors are commutators.
Finally, in section~\ref{sec:ques}, we provide classes of examples of elements of
II$_1$--factors that are not normal and not nilpotent but are single commutators, and we ask some
specific questions suggested by our examples and results.

\smallskip
\noindent{\bf Acknowledgement.}
The authors thank Heydar Radjavi for stimulating discussions about commutators,
and Gabriel Tucci for help with his operators.

\section{Some normal operators}
\label{sec:normal}

The following lemma (but with a constant of $2$) was described
in Concluding Remark (1) of~\cite{PT69},
attributed to unpublished work of John Dyer.
That the desired ordering of eigenvalues can be made
with bounding constant~$4$ follows from work of Steinitz~\cite{St13}, the value $2$ follows from~\cite{GS80}
and
the better constant in the version below (which is not actually needed in our application of it)
is due to work of Banaszczyk~\cite{Ba87}, \cite{Ba90}.
\begin{lemma}\label{lem:Anormal}
Let $A\in M_n(\Cpx)$ be a normal element with $\tr_n(A)=0$.
Then there are $B,C\in M_n(\Cpx)$ with $A=[B,C]$ and $\|B\|\,\|C\|\le\frac{\sqrt5}2\|A\|$.
\end{lemma}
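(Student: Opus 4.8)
The plan is to reduce to the case of a diagonal matrix, reorder the eigenvalues by a Steinitz-type rearrangement so that their partial sums are small, and then write down $B$ and $C$ explicitly as weighted shifts whose commutator is the prescribed diagonal matrix.

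First, since $A$ is normal I would write $A=UDU^*$ with $U$ unitary and $D=\diag(\lambda_1,\ldots,\lambda_n)$. The hypothesis $\tr_n(A)=0$ gives $\sum_k\lambda_k=0$, and normality gives $\|A\|=\max_k|\lambda_k|$. Conjugation by $U$ preserves both norms and the commutator structure: if $D=[B_0,C_0]$ then $A=[\,UB_0U^*,UC_0U^*\,]$ with the same product of norms. Hence it suffices to realize the diagonal matrix $D$ as a commutator with the stated bound, and I may assume $A=D$. Next I would invoke the rearrangement result quoted before the statement (Banaszczyk's sharp planar Steinitz constant $\tfrac{\sqrt5}2$ for $\Cpx=\Reals^2$): because $\sum_k\lambda_k=0$ and each $|\lambda_k|\le\|A\|$, the eigenvalues may be permuted so that the partial sums $s_k=\sum_{j=1}^k\lambda_j$ satisfy $|s_k|\le\tfrac{\sqrt5}2\|A\|$ for all $k$, where $s_0=s_n=0$.

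For the construction, take $B$ to be the subdiagonal weighted shift with entries $B_{k+1,k}=b_k$ and $C$ the superdiagonal weighted shift with entries $C_{k,k+1}=c_k$ for $1\le k\le n-1$, all other entries being zero. A short computation shows that both $BC$ and $CB$ are diagonal (the single subdiagonal composed with the single superdiagonal lands only on the diagonal), so $[B,C]$ is automatically diagonal, with $k$th diagonal entry $b_{k-1}c_{k-1}-b_kc_k$ under the convention $b_0c_0=b_nc_n=0$. Choosing the moduli $|b_k|=|c_k|=\sqrt{|s_k|}$ and the phases of $b_k,c_k$ so that $b_kc_k=-s_k$, the $k$th diagonal entry becomes $s_k-s_{k-1}=\lambda_k$; thus $[B,C]=D$. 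Since $B$ and $C$ are weighted shifts, $\|B\|=\max_k\sqrt{|s_k|}=\|C\|$, whence $\|B\|\,\|C\|=\max_k|s_k|\le\tfrac{\sqrt5}2\|A\|$, as desired.

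The only genuinely substantive ingredient is the geometric rearrangement of the eigenvalues; everything after that is the explicit and elementary shift construction. Accordingly, I expect the main obstacle to lie entirely in securing the sharp planar Steinitz constant, which is exactly what is imported from the cited work of Banaszczyk — once the $\lambda_k$ are ordered so that all partial sums stay within the disk of radius $\tfrac{\sqrt5}2\|A\|$, the factorization $b_kc_k=-s_k$ delivers the norm estimate immediately.
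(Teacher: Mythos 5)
Your proof is correct and takes essentially the same route as the paper: diagonalize $A$ by a unitary, invoke Banaszczyk's planar Steinitz constant to reorder the eigenvalues so the partial sums $s_k$ stay within $\frac{\sqrt5}2\|A\|$, and realize the diagonal matrix as the commutator of an up-shift and a down-shift whose corresponding weights multiply to (signed) partial sums. The only difference is cosmetic: the paper concentrates all the weight in one factor (taking $B$ to be the unweighted shift, so $\|B\|\le1$, and $C=B^*D$ with $D$ the diagonal of partial sums), whereas you balance it as $|b_k|=|c_k|=\sqrt{|s_k|}$; both choices yield the same bound on $\|B\|\,\|C\|$.
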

\begin{proof}
After conjugating with a unitary, we may without loss of generality assume $A=\diag(\lambda_1,\ldots,\lambda_n)$
and we may choose the diagonal elements to appear in any prescribed order.
We have $A=[B,C]$ where
\begin{equation}\label{eq:B}
B=\left(
\begin{matrix}
0&1&0&\cdots&0 \\
0&0&1  \\
\vdots & &\ddots&\ddots \\
0& & \cdots    &  0   &1 \\
0&  0    & \cdots &      &0
\end{matrix}\right)
\end{equation}
and $C=B^*D$, where
\begin{equation}\label{eq:D}
D=\diag(\lambda_1,\,\lambda_1+\lambda_2,\,\ldots,\lambda_1+\cdots+\lambda_{n-1},0).
\end{equation}
By work of Banaszczyk~\cite{Ba87}, \cite{Ba90}, any list $\lambda_1,\ldots,\lambda_n$
of complex numbers whose sum is zero
can be reordered so that for all $k\in\{1,\ldots,n-1\}$ we have
\begin{equation}\label{eq:lambdasum}
\left|\sum_{j=1}^k\lambda_j\right|\le\frac{\sqrt5}2\max_{1\le j\le n}|\lambda_j|.
\end{equation}
This ensures $\|B\|\le1$ and $\|C\|\le\frac{\sqrt5}2\|A\|$.
\end{proof}

The II$_1$--factors of Wright~\cite{W54}
are the quotients of the von Neumann algebra of all bounded sequences in
$\prod_{n=1}^\infty M_n(\Cpx)$ by the ideal $I_\omega$, consisting
of all sequences $(a_n)_{n=1}^\infty\in\prod_{n=1}^\infty M_n(\Cpx)$
such that $\lim_{n\to\omega}\tr_n(a_n^*a_n)=0$, where $\omega$ is a nontrivial ultrafilter on the natural numbers.
The trace of the element of $\Mcal$ associated to a bounded sequence $(b_n)_{n=1}^\infty\in\prod_{n=1}^\infty M_n(\Cpx)$
is $\lim_{n\to\omega}\tr_n(b_n)$.
(See~\cite{McD70} or~\cite{J72} for ultrapowers of finite von Neumann algebras.)
The following result in the case of self--adjoint operators is due to Pearcy and Topping~\cite{PT69}.

\begin{thm}\label{thm:PT}
If $\Mcal$ is a Wright factor and if $T\in\Mcal$ is normal with $\tau(T)=0$, then $T\in\Comm(\Mcal)$.
\end{thm}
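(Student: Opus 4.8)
The plan is to reduce the statement to the finite-dimensional case settled in Lemma~\ref{lem:Anormal}, exploiting the realization of $\Mcal$ as the quotient by $I_\omega$ of the bounded sequences in $\prod_{n=1}^\infty M_n(\Cpx)$. Concretely, I would try to produce a representing sequence $(T_n)_{n=1}^\infty$ for $T$ in which every $T_n$ is a \emph{normal} matrix of \emph{exact} trace zero; apply Lemma~\ref{lem:Anormal} in each coordinate to write $T_n=[B_n,C_n]$ with good norm control; and then check that the sequences $(B_n)$ and $(C_n)$ are bounded, so that they determine elements $B,C\in\Mcal$ with $[B,C]=T$.

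The first --- and decisive --- step is to choose the representative of $T$ to consist of normal matrices. For a self-adjoint $T$ this is immediate: any representing sequence $(S_n)$ may be replaced by $(\frac12(S_n+S_n^*))$, which is self-adjoint and differs from $(S_n)$ by an element of $I_\omega$; this is the reduction used by Pearcy and Topping. For a genuinely normal $T$ no such elementary symmetrization is available, and here I would invoke the result of Hadwin~\cite{Had98}: a normal element of an ultraproduct of matrix algebras admits a representing sequence of normal matrices. This is exactly where the hypothesis that $T$ be normal (rather than merely self-adjoint) is used, and I expect it to be the main obstacle, the underlying point being that a sequence which is approximately normal in the trace norm lies close, again in the trace norm, to a genuinely normal sequence.

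Granting such a normal representative $(T_n)$, which is a bounded sequence since it represents the bounded element $T$, the remaining adjustments are routine. Since $\tau(T)=0$ we have $\lim_{n\to\omega}\tr_n(T_n)=0$, so replacing $T_n$ by $T_n'=T_n-\tr_n(T_n)I_n$ keeps each matrix normal, makes the trace exactly zero, and alters the sequence only by an element of $I_\omega$, because $\|\tr_n(T_n)I_n\|_2=|\tr_n(T_n)|\to0$ along $\omega$. Thus $(T_n')$ still represents $T$, and $\sup_n\|T_n'\|<\infty$.

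Finally, I would apply Lemma~\ref{lem:Anormal} to each $T_n'$ to obtain $B_n,C_n\in M_n(\Cpx)$ with $T_n'=[B_n,C_n]$, $\|B_n\|\le1$ and $\|C_n\|\le\frac{\sqrt5}{2}\|T_n'\|$. The uniform bound on $\|T_n'\|$ then gives $\sup_n\|B_n\|<\infty$ and $\sup_n\|C_n\|<\infty$, so $B=(B_n)_n+I_\omega$ and $C=(C_n)_n+I_\omega$ are well-defined elements of $\Mcal$. Since the commutator is computed coordinatewise, $[B,C]$ is represented by $([B_n,C_n])_n=(T_n')_n$, whence $[B,C]=T$ and $T\in\Comm(\Mcal)$, as desired.
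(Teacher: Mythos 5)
Your proposal is correct and follows essentially the same route as the paper: obtain a representing sequence of normal, exactly traceless matrices, apply Lemma~\ref{lem:Anormal} coordinatewise with its norm bounds, and pass to the quotient to get $T=[B,C]$. The only difference is one of detail: you cite Hadwin~\cite{Had98} as directly supplying a normal representative, whereas the paper derives this from Hadwin's Theorem~2.1 on approximate unitary equivalence in $\|\cdot\|_2$ --- it constructs commuting, self--adjoint, \emph{traceless} matrices $H_n,K_n$ whose joint moments approximate those of the real and imaginary parts of $T$, then conjugates the given representative close to $H_n+iK_n$; since those approximants are already traceless, the paper does not even need your (perfectly valid) trace-correction step.
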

\begin{proof}
Let $T\in\Mcal$ be normal and let $X$ and $Y$ be the real and imaginary parts of $T$, respecitvely.
Let $(S_n)_{n=1}^\infty\in\prod_{n=1}^\infty M_n(\Cpx)$ be a representative of $T$, with $\|S_n\|\le\|T\|$ for all $n$.
Let $X_n$ and $Y_n$ be the real and imaginary parts of $S_n$.
Then the mixed $*$--moments of the pair $(X_n,Y_n)$ converge as $n\to\omega$ to the mixed $*$--moments of $(X,Y)$.
By standard methods, we can construct some commuting, self--adjoint, traceless $n\times n$ matrices
$H_n$ and $K_n$ such that $H_n$ converges
in moments to $X$ and $K_n$ converges in moments to $Y$, as $n\to\infty$.
Now using a result of Hadwin (Theorem 2.1 of~\cite{Had98}), we find $n\times n$ unitaries $U_n$ such that 
\[
\lim_{n\to\omega}\|U_nX_nU_n^*-H_n\|_2=0
\qquad
\lim_{n\to\omega}\|U_nY_nU_n^*-K_n\|_2=0,
\]
where $\|Z\|_2=\tr_n(Z^*Z)^{1/2}$ is the Euclidean norm resulting from the normalized trace on $M_n(\Cpx)$.
This shows that $T$ has respresentative $(T_n)_{n=1}^\infty$,
where $T_n=U_n^*(H_n+iK_n)U_n$ is normal and, of course, traceless.

By Lemma~\ref{lem:Anormal}, for each $n$ there are $B_n,C_n\in M_n(\Cpx)$ with $\|B_n\|=1$ and
$\|C_n\|\le\frac{\sqrt5}2\|T\|$ such that
$T_n=[B_n,C_n]$.
Let $B,C\in\Mcal$ be the images (in the quotient
$\prod_{n=1}^\infty M_n(\Cpx)/I_\omega$) of $(B_n)_{n=1}^\infty$ and $(C_n)_{n=1}^\infty$, respectively.
Then $T=[B,C]$.
\end{proof}

The {\em distribution} of a normal element $T$ in a II$_1$--factor is the compactly supported Borel probability measure
on the complex plane obtained by composing the trace with the projection--valued spectral measure of $T$.

\begin{thm}\label{thm:normalhyp}
If $R$ is the hyperfinite II$_1$--factor and if $\mu$ is a compactly supported Borel probability measure on
the complex plane such that $\int z\,\mu(dz)=0$,
then there is a normal element $T\in\Comm(R)$ whose distribution is $\mu$.
\end{thm}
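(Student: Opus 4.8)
The plan is to build the normal element $T$ together with an explicit factorization $T=[B,C]$ inside the approximately finite dimensional structure of $R$, realized as the weak closure of an increasing union of matrix subalgebras $M_{n_1}\subseteq M_{n_2}\subseteq\cdots$, and to obtain $T$ as a limit of matrix commutators furnished by Lemma~\ref{lem:Anormal}. Producing \emph{some} normal element with distribution $\mu$ is not the difficulty: any diffuse abelian subalgebra $L^\infty[0,1]\subseteq R$ carries one, namely multiplication by a function $f$ with $f_*(\mathrm{Leb})=\mu$ and $\int f=0$; the entire content is to arrange that such an element is a single commutator. Here the ultrapower argument behind Theorem~\ref{thm:PT} is unavailable, precisely because $R$ has separable predual, so a coherent construction inside $R$ is needed. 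Two analytic facts drive the limiting argument: (i) the uniform norm control of Lemma~\ref{lem:Anormal}, which bounds the factors of each approximant independently of the approximation; and (ii) on norm--bounded subsets of a II$_1$--factor the strong topology coincides with the $\|\cdot\|_2$--topology and multiplication is jointly continuous, so the commutator of two uniformly bounded, $\|\cdot\|_2$--convergent sequences converges in $\|\cdot\|_2$ to the commutator of the limits. Hence it suffices to produce uniformly bounded, $\|\cdot\|_2$--Cauchy sequences $(B_k),(C_k)$ in $R$ with $[B_k,C_k]$ normal, traceless, and $\|\cdot\|_2$--convergent to an element of distribution $\mu$: the limits then satisfy $T=[B,C]$, and $T$ is automatically normal with distribution $\mu$, since normality and all $*$--moments pass to $\|\cdot\|_2$--limits of uniformly bounded sequences.

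The key algebraic device I would exploit is the behaviour of commutators across commuting tensor legs. Writing $R\cong R\otimes R\otimes\cdots$ and placing $B_j,C_j$ on the $j$--th leg, the cross terms cancel exactly:
\[
\Bigl[\sum_j B_j,\ \sum_j C_j\Bigr]=\sum_j[B_j,C_j],
\]
because operators on distinct legs commute. If each $a_j:=[b_j,c_j]\in M_{n_j}$ is a mean--zero normal matrix realized by Lemma~\ref{lem:Anormal}, then after rescaling $b_j,c_j$ so that $\|b_j\|=\|c_j\|\le(\tfrac{\sqrt5}2)^{1/2}\|a_j\|^{1/2}$, the series $\sum_jB_j$ and $\sum_jC_j$ converge in operator norm as soon as $\sum_j\|a_j\|^{1/2}<\infty$. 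The resulting single commutator $T=\sum_j a_j^{(j)}$, where $a_j^{(j)}$ is the copy of $a_j$ on the $j$--th leg, is a sum of commuting, tensor--independent, mean--zero normals, hence normal with distribution the convolution $\mu_1*\mu_2*\cdots$ of the distributions of the $a_j$. This already realizes a large class of $\mu$ as single commutators with full norm control, for instance the centred uniform distribution on a real interval, via the factorization $\tfrac{\sin t}{t}=\prod_{k\ge1}\cos(2^{-k}t)$ into Bernoulli factors of shrinking support.

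The main obstacle is that a general compactly supported mean--zero measure is \emph{not} an infinite convolution of small--support pieces (indecomposable measures exist), so independent increments alone cannot produce every $\mu$, and any correlated correction reintroduces cross terms that no longer cancel. The crux is therefore to pass from distribution to operator \emph{coherently}. I would approximate $\mu$ in moments by a refining sequence of finitely supported mean--zero measures $\mu_k$ (step functions against refining partitions of a disk containing $\mathrm{supp}\,\mu$), realize the corresponding normal traceless elements $T_k$ as a martingale with $E_{M_{n_k}}(T_{k+1})=T_k$, so that $T_k\to T$ in $\|\cdot\|_2$ with distribution $\mu$, and realize each increment $T_{k+1}-T_k$ as a commutator localized on a fresh matrix leg via Lemma~\ref{lem:Anormal}. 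The delicate point is to choose these localized factorizations so that $\|B_{k+1}-B_k\|_2$ and $\|C_{k+1}-C_k\|_2$ are summable --- achievable by refining slowly, so the increments and their balanced factors are small --- while controlling the cross terms, either by a block--triangular choice in which the shifts act only within the current spectral blocks on the new leg, forcing the unwanted products to vanish, or by absorbing a residual error through a successive--approximation scheme kept convergent by the uniform constant of Lemma~\ref{lem:Anormal}. Once $(B_k),(C_k)$ are $\|\cdot\|_2$--Cauchy and uniformly bounded, fact~(ii) yields $T=[B,C]$ with $B,C\in R$, completing the proof.
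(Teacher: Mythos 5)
Your reduction to uniformly bounded, $\|\cdot\|_2$--Cauchy sequences of factors is sound, and the tensor--leg convolution construction is correct as far as it goes; but the proof collapses exactly at the point you yourself label ``the delicate point,'' and neither of the two mechanisms you offer there survives scrutiny. First, Lemma~\ref{lem:Anormal} provides no coherence between refinement levels: its factorization of $T_{k+1}$ depends on a Steinitz--Banaszczyk reordering of all $n_{k+1}$ eigenvalues, and this ordering cannot be chosen compatibly with the one used for $T_k$. Indeed, if each level-$k$ eigenvalue splits into $m$ refined values with the same average (your martingale condition) and you keep the groups contiguous in the old order, the partial sums at group boundaries become $m$ times the old partial sums, destroying the bound~\eqref{eq:lambdasum}; restoring it forces a global re-interleaving, after which the new shift and partial-sum diagonal bear no $\|\cdot\|_2$--relation to the old ones. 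So smallness of $T_{k+1}-T_k$ does not transfer to smallness of $B_{k+1}-B_k$ and $C_{k+1}-C_k$, which is what your Cauchy argument needs. Second, the block-localized repair hits an algebraic rather than analytic obstruction: the increment $T_{k+1}-T_k$ is block-diagonal and block-traceless, so Lemma~\ref{lem:Anormal} does write it as a commutator $[\beta_{k+1},\gamma_{k+1}]$ supported in the level-$k$ spectral blocks, but summing over $k$ then expresses $T$ as a \emph{series} of commutators whose cross terms $[\beta_j,\gamma_k]$, $j\ne k$, do not vanish --- the coarse-level shifts permute exactly the finer blocks on which the fine-level factors depend, so they fail to commute. (A sum of two commutators is already known by Marcoux~\cite{M06}; the entire problem is to get one.) Third, the successive-approximation fallback cannot be run on Lemma~\ref{lem:Anormal} at all: the residual $T-[B_k,C_k]$ is traceless but has no reason to be normal, so the only tool you have does not apply to it.

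The gap is traceable to your opening move of discarding the ultrapower argument as ``unavailable.'' The paper's proof runs precisely that argument --- in the Wright factor $\Mcal$, where no coherence between levels is needed because the ultrafilter supplies convergence: one picks diagonal matrices $T_n$ whose empirical eigenvalue distributions tend weak$^*$ to $\mu$, reorders via Banaszczyk so that $T_n=[B_n,B_n^*D_n]$ with uniform norm bounds, and obtains $T=[B,B^*D]$ in $\Mcal$ with distribution $\mu$. The point you missed is that one never needs all of $\Mcal$ to sit inside $R$ (it cannot; its predual is nonseparable): one only needs the von Neumann subalgebra generated by $B$ and $\Ac$, where $\Ac$ is the commutative algebra generated by $\{B^kDB^{-k}\mid k\in\Ints\}$. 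That subalgebra is a group--measure-space crossed product $\Ac\rtimes\Ints$, hence hyperfinite by Connes' theorem, hence embeddable into the hyperfinite II$_1$--factor $R$; the identity $T=[B,B^*D]$ holds inside it and so transfers into $R$. In short, instead of forcing coherence inside $R$ --- which Lemma~\ref{lem:Anormal} cannot supply --- one lets the ultraproduct do the limiting and then observes a posteriori that the witnesses to the commutator live in a hyperfinite corner. Without this (or some genuinely new coherence mechanism), your outline does not yield the theorem.
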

\begin{proof}
We will consider a particular instance of the construction from the proof of Theorem~\ref{thm:PT}.
Let $\Mcal$ be a factor of Wright, with tracial state $\tau$.
Let $L$ be the maximum modulus of elements of the support of $\mu$.
We may choose complex numbers $(\lambda^{(n)}_j)_{j=1}^n$ for $n\ge1$ such that the measures $\frac1n\sum_{j=1}^n\delta_{\lambda_j^{(n)}}$
converge in weak$^*$--topology
to $\mu$ and all have support contained inside the disk of radius $L$ centered at the origin
and such that $\sum_{j=1}^n\lambda^{(n)}_j=0$ for each $n$.
Let $T_n=\diag(\lambda^{(n)}_1,\ldots,\lambda^{(n)}_n)\in M_n(\Cpx)$
and let $T\in\Mcal$ be the element associated to the sequence $(T_n)_{n=1}^\infty$.
Then the distribution of $T$ is $\mu$.
By~\cite{Ba87}, \cite{Ba90}, we can order these $\lambda^{(n)}_1,\ldots,\lambda^{(n)}_n$ so that
$\big|\sum_{j=1}^k\lambda^{(n)}_j\big|\le\frac{\sqrt5}2\|T\|$ for all $1\le k\le n$.
Then, as in the proof of Lemma~\ref{lem:Anormal}, we have $T_n=[B_n,B_n^*D_n]$ where $B_n$ and $D_n$ are the $n\times n$
matrices $B$ and $D$ of~\eqref{eq:B} and~\eqref{eq:D}, respectively.
If $B,D\in\Mcal$ are the images in the quotient of the sequences $(B_n)_{n=1}^\infty$ and $(D_n)_{n=1}^\infty$, respectively,
then $T=[B,B^*D]$.
However, note that $B\in\Mcal$ is a unitary element such that $\tau(B^k)=0$ for all $k>0$.
Moreover, the set $\{B^kDB^{-k}\mid k\in\Ints\}$ generates a commutative von Neumann subalgebra $\Ac$ of $\Mcal$
and every element of $\Ac$ is the image (under the quotient mapping) of a sequence $(A_n)_{n=1}^\infty$ where each $A_n\in M_n(\Cpx)$
is a diagonal matrix.
Thus, the unitary $B$ acts by conjugation on $\Ac$, and, moreover, we have $\tau(AB^k)=0$ for all $A\in\Ac$ and all $k>0$.
Therefore the von Neumann subalgebra generated by $\Ac\cup\{B\}$ is a case of the group--measure-space construction,
$\Ac\rtimes\Ints$, and is a hyperfinite von Neumann algebra
by \cite{Co76} and can, thus, be embedded into the hyperfinite II$_1$--factor $R$.
\end{proof}

The above proof actually shows the following.
\begin{cor}
Given any compactly supported Borel probability measure $\mu$ on the complex plane
with $\int z\,\mu(dz)=0$, there is $f\in L^\infty([0,1])$ and
a probability-measure-preserving transformation $\alpha$ of $[0,1]$ such that the distribution of $f-\alpha(f)$ equals $\mu$
and the supremum norm of $f$ is no more than $\frac{\sqrt5}2$ times the maximum modulus of the support of $\mu$.
\end{cor}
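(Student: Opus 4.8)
The plan is to unwind the objects already built in the proof of Theorem~\ref{thm:normalhyp} and reinterpret them inside the abelian algebra $\Ac$. Recall from that proof the Wright factor $\Mcal$, the Haar unitary $B$, the normal element $D\in\Ac$, the abelian von Neumann subalgebra $\Ac$ generated by $\{B^kDB^{-k}:k\in\Ints\}$, and the normal element $T=[B,B^*D]$ whose distribution is the prescribed measure $\mu$. I would first record the algebraic identity $[B,B^*D]=BB^*D-B^*DB=D-B^*DB$, using $BB^*=1$. Writing $\alpha$ for the $*$-automorphism $A\mapsto B^*AB$ of $\Ac$, this gives $T=D-\alpha(D)$, where both $D$ and $\alpha(D)=B^*DB$ lie in $\Ac$ (the latter is the $k=-1$ generator), and where $\alpha$ is trace-preserving since $\tau(B^*AB)=\tau(ABB^*)=\tau(A)$.

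Next I would identify $\Ac$ measure-theoretically. Since $\Ac$ is generated by the countable family $\{B^kDB^{-k}\}$ and carries the faithful normal tracial state $\tau|_\Ac$, it has separable predual and is therefore isomorphic to $L^\infty$ of a standard probability space, with $\alpha$ corresponding to a probability-measure-preserving transformation of that space. To land on $[0,1]$ with Lebesgue measure specifically, I would tensor with a diffuse abelian algebra—replacing $\Ac$ by $\Ac\,\bar\otimes\,L^\infty([0,1])$ and extending $\alpha$ by the identity on the new tensor factor—so that the resulting space is nonatomic standard, hence isomorphic to $([0,1],\text{Lebesgue})$. I let $f$ be the function corresponding to $D\otimes1$ and, abusing notation, continue to write $\alpha$ for the induced transformation.

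It then remains to read off the two assertions. Because $D$ and $\alpha(D)$ both lie in the abelian algebra $\Ac$, the distribution of the normal element $T=D-\alpha(D)$ computed from $\tau$ coincides with the distribution of the corresponding function $f-\alpha(f)\in L^\infty([0,1])$; since the former is $\mu$ by the construction in Theorem~\ref{thm:normalhyp}, so is the latter. For the norm estimate, $\|f\|_\infty=\|D\|$, and $\|D\|=\lim_{n\to\omega}\|D_n\|$ where $D_n=\diag(\lambda^{(n)}_1,\dots,\lambda^{(n)}_1+\cdots+\lambda^{(n)}_{n-1},0)$; the Banaszczyk reordering bounds each partial sum by $\frac{\sqrt5}2$ times the maximum modulus of the $\lambda^{(n)}_j$, which is in turn at most $\frac{\sqrt5}2$ times the maximum modulus $L$ of the support of $\mu$, so that $\|f\|_\infty\le\frac{\sqrt5}2L$.

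The one point requiring genuine care—rather than the essentially automatic translation of operators into functions—is the measure-space identification: securing an honest isomorphism with $([0,1],\text{Lebesgue})$ when $\Ac$ may fail to be diffuse. The tensoring device above is what I would use to sidestep this, and it affects neither the distribution of $f-\alpha(f)$ nor the supremum norm of $f$.
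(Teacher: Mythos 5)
Your proposal is correct and takes essentially the same route as the paper, which gives no separate argument but simply asserts that the proof of Theorem~\ref{thm:normalhyp} ``actually shows'' the corollary: you unwind $T=[B,B^*D]=D-B^*DB$ inside the abelian algebra $\Ac$, identify conjugation by $B$ with a trace-preserving automorphism, and read off the distribution and the norm bound exactly as intended. Your tensoring with a diffuse abelian factor to handle the case where $\Ac$ has atoms (so that it can honestly be identified with $L^\infty([0,1])$ with Lebesgue measure) is a correct treatment of a detail the paper leaves implicit.
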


\begin{thm}\label{thm:atomic}
If $\Mcal$ is any II$_1$--factor and $T\in\Mcal$ is a normal element whose distribution
is purely atomic
and with trace $\tau(T)=0$, then $T\in\Comm(\Mcal)$.
\end{thm}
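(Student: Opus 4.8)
The plan is to reduce everything to the hyperfinite case already handled in Theorem~\ref{thm:normalhyp}, exploiting the fact that the \emph{atomic} hypothesis forces any two normal elements with distribution $\mu$ to be unitarily conjugate inside $\Mcal$. First I would record the structure of $T$. Writing the purely atomic distribution as $\mu=\sum_k p_k\,\delta_{z_k}$ with the $z_k$ distinct, $p_k>0$ and $\sum_k p_k=1$, the spectral theorem produces mutually orthogonal projections $E_k\in\Mcal$ with $\sum_k E_k=1$, with $\tau(E_k)=p_k$, and with $T=\sum_k z_k E_k$ (strong convergence). Since $T$ is bounded, $\mu$ is supported in $\{|z|\le\|T\|\}$, hence is compactly supported, and by hypothesis $\int z\,\mu(dz)=\tau(T)=0$.

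Next I would manufacture a model commutator with the right distribution. By Theorem~\ref{thm:normalhyp} there is a normal element $T_0\in\Comm(R)$ whose distribution is $\mu$; fix $B_0,C_0\in R$ with $T_0=[B_0,C_0]$. Every II$_1$--factor contains a trace-preserving copy of the hyperfinite factor, so I may view $R\subseteq\Mcal$. Then $T_0\in\Mcal$ is a normal element with the same distribution $\mu$ as $T$, and $T_0=[B_0,C_0]$ exhibits $T_0\in\Comm(\Mcal)$. Writing $T_0=\sum_k z_k F_k$ for its spectral projections $F_k$, we have $\tau(F_k)=p_k=\tau(E_k)$ for each $k$.

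The core step is to conjugate $T$ onto $T_0$ by a unitary of $\Mcal$. Because $\Mcal$ is a factor and $\tau(E_k)=\tau(F_k)$, the comparison theory of projections yields partial isometries $v_k\in\Mcal$ with $v_k^*v_k=E_k$ and $v_kv_k^*=F_k$. Their source projections are mutually orthogonal and sum to $1$, as are their range projections, so $u:=\sum_k v_k$ converges in the strong topology to a unitary satisfying $uE_ku^*=F_k$ for every $k$. Hence $uTu^*=\sum_k z_k F_k=T_0$, and therefore $T=u^*T_0u=u^*[B_0,C_0]u=[u^*B_0u,\,u^*C_0u]\in\Comm(\Mcal)$, which is the assertion.

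The main obstacle, and the place where the hypothesis does real work, is the unitary conjugacy in the third paragraph: it is precisely atomicity that lets the trace-preserving isomorphism of the (atomic) algebras $W^*(T)$ and $W^*(T_0)$ be implemented by an \emph{inner} automorphism of $\Mcal$, since an atomic normal element is determined up to unitary conjugacy by the traces of its spectral projections alone. For a diffuse distribution the analogous reduction would break down, as two diffuse copies of $W^*(T)$ in $\Mcal$ need not be conjugate by a unitary carrying one generator to the other; this is why the theorem is stated only for purely atomic distributions.
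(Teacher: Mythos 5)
Your proposal is correct and follows essentially the same route as the paper: embed the hyperfinite factor $R$ unitally in $\Mcal$, invoke Theorem~\ref{thm:normalhyp} to produce a commutator in $R$ with distribution $\mu$, and conjugate it onto $T$ by a unitary of $\Mcal$. The only difference is that you spell out the unitary-conjugacy step (matching equal-trace spectral projections via comparison theory and summing the partial isometries), which the paper asserts in one line as a consequence of atomicity.
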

\begin{proof}
$\Mcal$ contains a (unital) subfactor $R$ isomorphic to the hyperfinite II$_1$--factor.
By Theorem~\ref{thm:normalhyp}, there is an element $\Tt\in\Comm(R)$ whose distribution equals the distribution of $T$.
Since this distribution is purely atomic,
there is a unitary $U\in\Mcal$ such that $U\Tt U^*=T$.
Thus, $T\in\Comm(\Mcal)$.
\end{proof}

\section{Nilpotent operators}
\label{sec:nilpotent}

The von Neumann algebra $\Mcal$ is embedded in $B(\HEu)$ as a strong--operator--topology closed, self--adjoint subalgebra.
If $T\in\Mcal$, we denote the self--adjoint projection onto $\ker(T)$ by $\kerproj(T)$ and the self--adjoint projection onto
the closure of the range of $T$ by $\ranproj(T)$.
Both of these belong to $\Mcal$, and we have \[\tau(\kerproj(T))+\tau(\ranproj(T))=1\]

The following decomposition follows from the usual sort of analysis of subspaces that one does also
in the finite dimensional setting.
\begin{lemma}\label{lem:UT}
Let $\Mcal$ be a II$_1$--factor and let $T\in\Mcal$ be nilpotent, $T^n=0$.
Then there are integers
$n\ge k_1>k_2>\ldots>k_m\ge1$ and for each $j\in\{1,\ldots,m\}$ there are equivalent projections $f^{(j)}_1,\ldots,f^{(j)}_{k_j}$
in $\Mcal$ such that
\begin{enumerate}[(i)]
\item $f^{(j)}:=f^{(j)}_1+\cdots+f^{(j)}_{k_j}$ commutes with $T$,
\item $f^{(1)}+\cdots+f^{(m)}=1$,
\item the $k_j\times k_j$ matrix of $f^{(j)}T$ with respect to these projections $f^{(j)}_1,\ldots,f^{(j)}_{k_j}$ is strictly upper triangular.
\end{enumerate}
\end{lemma}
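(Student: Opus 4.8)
The plan is to mimic the finite–dimensional Jordan/Schur analysis, replacing invariant subspaces by projections in $\Mcal$ and dimensions by traces. First I would set up the kernel filtration: put $P_i=\kerproj(T^i)$ for $0\le i\le n$, so that $P_0=0$, $P_n=1$ (because $T^n=0$) and $P_0\le P_1\le\cdots\le P_n$ is an increasing family of projections in $\Mcal$. Write $Q_i=P_i-P_{i-1}$ for the orthogonal ``layers.'' The one computation I would record at the outset is that $T$ carries the range of $P_i$ into the range of $P_{i-1}$ — indeed $T^{i-1}(Tx)=T^ix=0$ for $x\in\ker T^i$ — so that $Q_aTQ_b=0$ whenever $a\ge b$. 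Thus $T$ is already strictly upper triangular with respect to the flag $(Q_1,\dots,Q_n)$, and consequently any decomposition whose slices refine the layers and are ordered by layer will automatically satisfy (iii). This moves the whole difficulty onto producing slices that are of equal trace within a block and that assemble into projections $f^{(j)}$ commuting with $T$.

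Next I would establish the monotonicity $\tau(Q_1)\ge\tau(Q_2)\ge\cdots\ge\tau(Q_n)$, which will govern the block sizes. The key point is that the layer-to-layer compression $Q_{i-1}TQ_i$ is injective on the range of $Q_i$: if $x\in\operatorname{ran}Q_i$ and $Q_{i-1}Tx=0$, then $Tx\in\ker T^{i-1}$ is orthogonal to $\operatorname{ran}Q_{i-1}$, so $Tx\in\ker T^{i-2}$, i.e. $T^{i-1}x=0$; but $x\perp\ker T^{i-1}$, so $x=0$. Taking the partial isometry $V_i$ in the polar decomposition of $Q_{i-1}TQ_i$ then gives $V_i^*V_i=Q_i$ and $V_iV_i^*\le Q_{i-1}$, so $Q_i$ is subequivalent to $Q_{i-1}$ and $\tau(Q_i)\le\tau(Q_{i-1})$. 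These $V_i$ are precisely the tools that transport a generator in one layer to the layer below while preserving orthogonality, and they let me read off the candidate data: the sizes $k_1>\cdots>k_m$ are the indices $i$ at which $\tau(Q_i)>\tau(Q_{i+1})$ (with $Q_{n+1}=0$), and the block of size $k_j$ should carry slices of common trace $\tau(Q_{k_j})-\tau(Q_{k_j+1})$.

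With this in hand I would build the blocks by ``squaring off'' the staircase of layers, processing them from the top down. At each layer I split $Q_i$ orthogonally into a part coming from above, transported down by $V_{i+1}$ from the slices already built in layer $i+1$, and a remaining part that begins new blocks of size $i$; transporting by the $V$'s keeps slices in different layers orthogonal and, within a block, mutually equivalent (each joined to the next by a partial isometry), while the continuous dimension of the factor is used to cut the leftover pieces to the required equal traces.

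The main obstacle — and the place where the argument is genuinely more delicate than its finite–dimensional shadow — is verifying (i), that each assembled $f^{(j)}$ \emph{commutes} with $T$, not merely that its range is $T$–invariant. Because $T$ maps a layer into \emph{all} lower layers rather than only the adjacent one, naïvely pushing each chain down by $T$ itself preserves $T$–invariance but destroys orthogonality, whereas pushing down by the polar partial isometries preserves orthogonality but can destroy $T$–invariance; reconciling the two amounts to a simultaneous orthogonalization of the chains, the von Neumann analogue of choosing a Schur/Jordan basis that is orthogonal across blocks. I expect the crux to be showing that such reducing projections exist, which I would approach through the trace majorization implied by $\tau(Q_1)\ge\cdots\ge\tau(Q_n)$ (giving $\tau(P_c)$ at least the required partial sums) together with the abundance of equivalent projections in a II$_1$–factor. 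When no nontrivial reducing projection is available, the construction should collapse to the single flag $m=1$, $f^{(1)}=1$, which is the continuous analogue of Schur triangularization and which the trace bounds still permit.
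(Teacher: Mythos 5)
Your opening moves coincide with the paper's: the kernel filtration, the strict upper triangularity of $T$ with respect to it, the injectivity of the layer-to-layer compressions, and the monotonicity of the layer traces all appear in the paper's proof (phrased there via $p_j=\kerproj(T^j)-\kerproj(T^{j-1})$, $\tau(\ranproj(Tp_j))=\tau(p_j)$ and $\ranproj(Tp_j)\wedge(p_1+\cdots+p_{j-2})=0$). But your argument stops being a proof exactly where the lemma's content begins: property (i) is never established. You yourself flag that transporting slices down by the polar isometries of $Q_{i-1}TQ_i$ can destroy $T$--invariance, and you defer the repair to a ``simultaneous orthogonalization'' that you ``expect'' to get from trace majorization and the abundance of projections; that is a restatement of the problem, not an argument. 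Worse, your governing plan --- slices that \emph{refine the layers}, with block sizes the indices where $\tau(Q_i)>\tau(Q_{i+1})$ and slice traces $\tau(Q_{k_j})-\tau(Q_{k_j+1})$ --- is provably unachievable, not merely unverified. Take $T=E_{12}+E_{23}+E_{14}\in M_4(\Cpx)\subseteq\Mcal$ (matrix units, $M_4(\Cpx)$ unital in $\Mcal$), so $T^3=0$, $n=3$, and $\tau(Q_1)=\frac12$, $\tau(Q_2)=\tau(Q_3)=\frac14$. Your recipe predicts blocks of sizes $3$ and $1$ with all slices of trace $\frac14$. But a block of size $1$ needs a nonzero projection $f^{(2)}$ with $Tf^{(2)}=f^{(2)}T=0$, i.e.\ $f^{(2)}\le\kerproj(T)\wedge(1-\ranproj(T))$, and for this $T$ that meet is $0$. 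In fact, since this $T$ generates $M_4(\Cpx)$ as a $*$--algebra, every projection commuting with $T$ lies in $M_4(\Cpx)'\cap\Mcal$, and one checks that the \emph{only} decompositions satisfying (i)--(iii) have $m=1$, $k_1=3$, with three slices of trace $\frac13$; since $\frac13>\frac14=\tau(Q_2)=\tau(Q_3)$, such slices cannot refine the layers at all. So the staircase picture must be abandoned, not completed: the correct slices necessarily cut across the kernel filtration.

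For comparison, the paper takes a different route: it transports the \emph{top} layer down by powers of $T$ itself, setting $q_n=p_n$ and $q_{n-j}=\ranproj(T^jp_n)$ (so that $T\,\mathrm{ran}(q_{k+1})\subseteq\mathrm{ran}(q_k)$ holds by construction), restores orthogonality by the lattice operations $f_k=(q_k\vee\cdots\vee q_n)-(q_{k+1}\vee\cdots\vee q_n)$, obtains equivalence of the $f_k$ from the trace identities, and then produces the remaining blocks by induction on $n$ applied to $T(1-f)$, $f=f_1+\cdots+f_n$. You should know, however, that the difficulty you isolated is exactly the thin point of that argument as well: the construction immediately gives only that $\mathrm{ran}(f)$ is $T$--invariant, i.e.\ $(1-f)Tf=0$, whereas the assertion that $T$ \emph{commutes} with $f$ also requires $fT(1-f)=0$, and in the example above the paper's recipe yields $q_3=E_{33}$, $q_2=E_{22}$, $q_1=E_{11}$, hence $f=E_{11}+E_{22}+E_{33}$ and $fT(1-f)=E_{14}\ne0$. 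So the step you left open is not a routine verification one can wave at with ``abundance of projections''; it is the crux of the lemma, it is where any correct proof must do genuinely new work (building flags, like the trace-$\frac13$ one above, that are compatible with $T^*$ as well as $T$), and your proposal does not supply that work.
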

In other words, the lemma says that $T$ lies in a unital $*$--subalgebra
of $\Mcal$ that is isomorphic to $M_{k_1}(\Afr_1)\oplus\cdots\oplus M_{k_m}(\Afr_m)$
for certain compressions $\Afr_j$ of $\Mcal$ by projections, and the direct summand component of $T$ in each $M_{k_j}(\Afr_j)$ is
a strictly upper triangular matrix.
\begin{proof}
The proof is by induction on $n$.
The case $n=1$ is clear, because then $T=0$.
Assume $n\ge2$.
We consider the usual system $p_1,p_2,\ldots,p_n$ of pairwise orthogonal projections with respect to which $T$ is upper triangular:
\begin{align*}
p_1&=\kerproj(T), \\
p_j&=\kerproj(T^j)-\kerproj(T^{j-1}),\quad(2\le j\le n).
\end{align*}
Then we have
\begin{gather}
\tau(\ranproj(Tp_j))=\tau(p_j),\qquad(2\le j\le n), \label{eq:Tpj} \\
\ranproj(Tp_j)\le\kerproj(T^{j-1})=p_1+p_2+\cdots+p_{j-1},\qquad(2\le j\le n), \label{eq:Tpjle} \\
\ranproj(Tp_j)\wedge(p_1+ p_2+\cdots+p_{j-2})=0,\qquad(3\le j\le n). \label{eq:rpw}
\end{gather}
Indeed, for~\eqref{eq:Tpj}, it will suffice to show $\kerproj(Tp_j)=1-p_j$.
For this, note that if $p_j\xi=\xi$ and $T\xi=0$, then $\xi\in\ker T\subseteq\ker T^{j-1}$.
Since $p_j\perp\kerproj(T^{j-1})$, this gives $\xi=0$.
The relation~\eqref{eq:Tpjle} is clear.
For~\eqref{eq:rpw}, if $q:=\ranproj(Tp_j)\wedge\kerproj(T^{j-2})\ne0$, then
by standard techniques (see, e.g.,
Lemma~2.2.1 of~\cite{CD09}), we would have a nonzero projection
$r\le p_j$ such that $q=\ranproj(Tr)\le\kerproj(T^{j-2})$.
However, this would imply $r\le\kerproj(T^{j-1})$, which contradicts $p_j\perp\kerproj(T^{j-1})$.

Let
\begin{align*}
q_n&=p_n\,, \\
q_{n-j}&=\ranproj(T^jq_n),\qquad(1\le j\le n-1).
\end{align*}
Then we have
\begin{gather}
q_k=\ranproj(Tq_{k+1})\le p_1+\cdots+p_k,\qquad(1\le k\le n-1), \label{eq:Tqk} \\
q_k\wedge(p_1+\cdots+p_{k-1})=0,\qquad(2\le k\le n). \label{eq:qk}
\end{gather}
Now~\eqref{eq:Tpj} and~\eqref{eq:Tqk} together imply $\tau(q_k)=\tau(q_{k+1})$,
and from~\eqref{eq:qk} we have $\tau(q_1\vee\cdots\vee q_k)=k\tau(q_1)$.
Thus, we have pairwise equivalent and orthogonal projections $f_1,\ldots,f_n$ defined by
\begin{align*}
f_n&=q_n\,, \\
f_k&=(q_k\vee\cdots\vee q_n)-(q_{k+1}\vee\cdots\vee q_n),\qquad(1\le k\le n-1),
\end{align*}
$T$ commutes with $f:=f_1+\cdots+f_n$ and $Tf$ is strictly upper triangular when written as an $n\times n$ matrix with respect
to $f_1,\ldots,f_n$.
Moreover, we have $(T(1-f))^{n-1}=T^{n-1}(1-f)=0$
and the induction hypothesis applies to $T(1-f)$.
\end{proof}

\begin{prop}
Let $\Mcal$ be a II$_1$--factor.
Then $\Comm(\Mcal)$ contains all nilpotent elements of $\Mcal$.
\end{prop}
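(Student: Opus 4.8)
The plan is to apply Lemma~\ref{lem:UT} to reduce the problem to a single strictly upper triangular matrix over a coefficient algebra, and then to exhibit the required commutator explicitly.

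First I would invoke Lemma~\ref{lem:UT}: given a nilpotent $T\in\Mcal$, it places $T$ inside a unital $*$--subalgebra of $\Mcal$ isomorphic to $M_{k_1}(\Afr_1)\oplus\cdots\oplus M_{k_m}(\Afr_m)$ (the $\Afr_j$ being compressions of $\Mcal$ by projections), in which $T=T_1\oplus\cdots\oplus T_m$ with each $T_j$ strictly upper triangular in $M_{k_j}(\Afr_j)$. A direct sum of commutators is a commutator: if $T_j=[B_j,C_j]$ then $T=[B_1\oplus\cdots\oplus B_m,\,C_1\oplus\cdots\oplus C_m]$, since the commutator map respects the block-diagonal decomposition. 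Hence it suffices to treat a single block, i.e.\ to show that every strictly upper triangular matrix in $M_k(\Afr)$ is a commutator, for $\Afr$ a von Neumann algebra.

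For a single block I would use the diagonal grading matrix $D=\diag(1,2,\ldots,k)$ with scalar entries, that is $D=\diag(1,\ldots,k)\otimes 1_\Afr$. A direct computation shows that the commutator map $X\mapsto[D,X]$ acts on the $(i,j)$ entry by multiplication by the scalar $i-j$; explicitly $([D,C])_{ij}=(i-j)C_{ij}$ for every $C=(C_{ij})\in M_k(\Afr)$. Given a strictly upper triangular $T=(T_{ij})$, so that $T_{ij}=0$ unless $i<j$, I would set $C_{ij}=(i-j)^{-1}T_{ij}$ for $i<j$ and $C_{ij}=0$ otherwise; this is well defined because $i-j$ is a nonzero integer whenever $i<j$, and then $[D,C]=T$. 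Since $D$ and $C$ both lie in the $*$--subalgebra supplied by Lemma~\ref{lem:UT}, they are genuine elements of $\Mcal$, which completes the argument.

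I do not expect a genuine obstacle here, as Lemma~\ref{lem:UT} performs the difficult structural analysis of the nilpotent. The only point needing care is that the explicit scalar construction, familiar from the finite-dimensional matrix case, goes through verbatim over the operator-valued coefficients $\Afr_j$; and it does, because the only divisions performed are by the nonzero integers $i-j$, which are invertible in $\Cpx$ and therefore act invertibly on each entry. One could alternatively cite the classical matrix results for the single-block step, but the grading construction is self-contained and handles the operator coefficients without modification.
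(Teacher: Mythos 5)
Your proof is correct, and it follows the paper's overall route: Lemma~\ref{lem:UT} does the structural work, the direct sum is handled blockwise (a commutator in each block gives a commutator of the block-diagonal sums), and everything reduces to exhibiting a strictly upper triangular matrix over a coefficient algebra $\Afr$ as a single commutator. Where you genuinely differ is in that last explicit step. The paper commutes against the shift matrix $B$ of~\eqref{eq:B}: it writes $A=[B,C]$ with $C$ supported on rows $2,\ldots,n$, determining the entries by the recursion $a_{1,j}=c_{2,j}$ and $a_{p,j}=c_{p+1,j}-c_{p,j-1}$. You instead commute against the scalar grading matrix $D=\diag(1,2,\ldots,k)\otimes 1_{\Afr}$, observing that $X\mapsto[D,X]$ multiplies the $(i,j)$ entry by the integer $i-j$, which is nonzero and central for $i<j$, so the map is inverted entrywise on the strictly upper triangular corner. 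Both arguments are elementary, work over an arbitrary complex algebra of coefficients, and stay inside the subalgebra supplied by Lemma~\ref{lem:UT}, so your proof is complete. Your version avoids solving a recursion and is arguably cleaner; it also makes one factor of the commutator positive and diagonal. The paper's version keeps one factor equal to the norm-one shift $B$ --- the same matrix used in Lemma~\ref{lem:Anormal}, where norm control of the factors actually matters --- but since no norm control is claimed in this Proposition, nothing is lost by your choice (your $D$ has norm $k$, which is harmless here).
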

\begin{proof}
By Lemma~\ref{lem:UT}, we only need to observe that a strictly upper triangular matrix in $M_n(\Afr)$ is
a single commutator, for any algebra $\Afr$.
But this is easy:  if
\[
A=\left(
\begin{matrix}
0&a_{1,2}&a_{1,3}&\cdots&a_{1,n} \\
0&0      &a_{2,3}&\cdots&a_{2,n} \\
\vdots &       &\ddots&\ddots&\vdots \\
 &       &       &      &a_{n-1,n} \\
0 &       & \cdots &      &0
\end{matrix}\right),
\]
then $A=BC-CB$, where $B$ is the matrix in~\eqref{eq:B},
\begin{equation}
C=\left(
\begin{matrix}
0&0&\cdots&0 \\
0&c_{2,2}&\cdots&c_{2,n} \\
\vdots& &\ddots&\vdots \\
0&\cdots&0    &c_{n,n}
\end{matrix}\right),
\end{equation}
and where the $c_{i,j}$ are chosen so that
\begin{align*}
a_{1,j}&=c_{2,j}\,,\qquad (2\le j\le n), \\
a_{p,j}&=c_{p+1,j}-c_{p,j-1}\,,\qquad(2\le p<j\le n).
\end{align*}
\end{proof}

\section{Examples and questions}
\label{sec:ques}

\begin{example}
A particular case of Theorem~\ref{thm:atomic} is that if $p$ is a projection
(with irrational trace)
in any II$_1$--factor $\Mcal$, then $p-\tau(p)1\in\Comm(\Mcal)$.
We note that a projection with rational trace is contained in some unital matrix
subalgebra $M_n(\Cpx)\subseteq\Mcal$; therefore, the case of a projection with rational trace is an immediate application of Shoda's result.
\end{example}

\begin{ques}
In light of Theorem~\ref{thm:atomic}, it is natural to ask:  does $\Comm(\Mcal)$ contain all normal elements of $\Mcal$
whose trace is zero?
(Note that each such element is the limit in norm of a sequence of elements of the sort considered in Theorem~\ref{thm:atomic}.)
It is of particular interest to focus on normal elements that generate maximal self--adjoint
abelian subalgebras (masas) in $\Mcal$.
Does it make a difference whether the masa is singular or semi-regular?
(See~\cite{SS08}.)
\end{ques}

A particular case:
\begin{ques}
If $a$ and $b$ freely generate the group $\Fb_2$, let $\lambda_a$ and $\lambda_b$
be the corresponding unitaries generating the group von Neumann algebra $L(\Fb_2)$.
Do we have $\lambda_a\in\Comm(L(\Fb_2))$?
\end{ques}

Our next examples come from ergodic theory.
\begin{example}\label{ex:ergodic}
Let $\alpha$ be an ergodic, probability measure preserving transformation of a standard Borel
probability space $X$,  that is not weakly mixing.
Consider the hyperfinite II$_1$--factor $R$ realized as the crossed product $R=L^\infty(X)\rtimes_\alphat\Ints$
where $\alphat$ is the automorphism of $L^\infty(X)$ arising from
$\alpha$ by $\alphat(f)=f\circ\alpha$.
For $f\in L^\infty([0,1])$, we let $\pi(f)$ denote the corresponding element of $R$, and we write $U\in R$ for the
implementing unitary,
so that $U\pi(f)U^*=\pi(\alphat(f))$.
By a standard result in ergodic theory (see, for example, Theorem 2.6.1 of~\cite{P83}), there is an eigenfunction,
i.e.,
$h\in L^\infty(X)\backslash\{0\}$ so that $\alphat(h)=\zeta h$ for some $\zeta\ne1$;
moreover, all eigenfunctions $h$ of an ergodic transformation must have $|h|$ constant.
If $g\in L^\infty(X)$, then
\begin{align*}
[U\pi(g),\pi(h)]=U\pi\big(g\big(h-\alphat^{-1}(h)\big)\big).
\end{align*}
Since $h-\alphat^{-1}(h)$ is invertible, by making appropriate choices of $g$ we get
$U\pi(f)=[U\pi(g),\pi(h)]\in\Comm(R)$ for all $f\in L^\infty(X)$.
\end{example}

\begin{ques}
If $\alpha$ is a weakly mixing transformation of $X$ (for example, a Bernoulli shift),
then, with the notation of Example~\ref{ex:ergodic},
do we have $U\pi(f)\in\Comm(R)$ for all $f\in L^\infty(X)$?
\end{ques}

\begin{example}
Assume that $\alphat$ from Example \ref{ex:ergodic} has infinitely many distinct eigenvalues. This is the case for every compact ergodic action $\alpha$ (for example, an irrational rotation of the circle or the odometer action), but can also hold for a non-compact action (for example, a skew rotation of the torus). For every finite set $F\subset\Ints\setminus\{0\}$, there is an eigenvalue $\zeta$ such that $\zeta^k\neq 1$, for any $k\in F$. Let $h$ be an eigenfunction of $\alphat$ corresponding to this eigenvalue $\zeta$; clearly, $|h|$ is a constant.
Then, for $g_k\in L^\infty(X)$,
\[
\left[\sum_{k \in F}U^k\pi(g_k),\pi(h)\right]=\sum_{k\in F} \left[U^k\pi(g_k),\pi(h)\right]
=\sum_{k\in F}U^k\pi\big(g_k\big(h-\alphat^{-k}(h)\big)\big).
\]
Thus, for any $f_k\in L^\infty(X)$, 
by choosing $g_k=f_k \big(h-\alphat^{-k}(h)\big)^{-1}$, we obtain 
\[
\sum_{k\in F} U^k\pi(f_k)\in\Comm(R).
\]
\end{example}

\begin{ques}
It is natural to ask Question~\ref{qn:comm} in the particular case of quasinilpotent elements $T$ of $\Mcal$:  must they lie in $\Comm(\Mcal)$?
From Proposition~4 of~\cite{MW79}, it follows that every 
quasinilpotent operator $T$ in a II$_1$--factor has trace zero.
(Alternatively, use L.\ Brown's analogue~\cite{B86} of Lidskii's theorem
in II$_1$--factors and the fact that the Brown measure of $T$ must be concentrated at $0$).
\end{ques}

\begin{ques}
Consider the quasinilpotent DT--operator $T$ (see~\cite{DH04}),
which is a generator of the free group factor $L(\Fb_2)$.
Do we have $T\in\Comm(L(\Fb_2))$?
\end{ques}

\begin{example}\label{ex:Tucci}
Consider G.\ Tucci's quasinilpotent operator
\[
A=\sum_{n=1}^\infty a_n V_n\in R,
\]
from~\cite{T08},
where $a=(a_n)_{n=1}^\infty\in\ell^1_+$, the set of summable sequences of nonnegative numbers.
Here $R=\overline{\bigotimes_1^\infty M_2(\Cpx)}$ is the hyperfinite II$_1$--factor and
\begin{equation}\label{def:Vn}
V_n=I^{\otimes n-1}\otimes\left(\begin{smallmatrix}0&1\\0&0\end{smallmatrix}\right)\otimes I\otimes I\otimes\cdots.
\end{equation}
Tucci showed in Remark~3.7 (p.\ 2978) of~\cite{T08} that $A$ is a single commutator whenever $a=(b_nc_n)_{n=1}^\infty$ for
some $b=(b_n)_{n=1}^\infty\in\ell^1$ and $c=(c_n)_{n=1}^\infty\in\ell^1$, by writing $A=[B,C]$, where
\begin{align}
B&=\sum_{n=1}^\infty b_nV_nV_n^*, \label{eq:Bop} \\
C&=\sum_{n=1}^\infty c_nV_n\,. \label{eq:C}
\end{align}
Note that, for $a\in\ell^1_+$,
there exist $b$ and $c$ in $\ell^1$ such that $a=(b_nc_n)_{n=1}^\infty$ if and only if
$\sum_{n=1}^\infty a_n^{1/2}<\infty$, i.e., if and only if $a\in\ell^{1/2}_+$.
\end{example}

The rest of the paper is concerned with some further results and remarks about Tucci's operators.

We might try to extend the formula $A=[B,C]$ for $B$ and $C$ as in~\eqref{eq:Bop} and~\eqref{eq:C}, respectively,
to other sequences $a\in\ell^1_+$, i.e.\ for $b$ and $c$ not necessarily in $\ell^1$, and where
the convergence in~\eqref{eq:Bop} and~\eqref{eq:C}
might be in some weaker topology.

We first turn our attention to~\eqref{eq:C}.
Denoting the usual embedding $R\hookrightarrow L^2(R,\tau)$ by $X\mapsto\Xh$,
from \eqref{def:Vn}
we see that the vectors $\Vh_n$ are orthogonal and all have $L^2(R,\tau)$-norm equal to
$1/\sqrt2$; therefore, the series~\eqref{eq:C}
converges in $L^2(R,\tau)$
as soon as $c\in\ell^2$, and we have
\begin{equation}\label{eq:Ch}
\Ch=\sum_{n=1}^\infty c_n\Vh_n.
\end{equation}
We easily see (below) that only for $c\in\ell^1$ there is a bounded
operator $C\in R$ such that $\Ch$ is given by~\eqref{eq:Ch}.

\begin{prop}\label{prop:cl1}
Let $c\in\ell^2$.
Suppose there is a bounded
operator $C\in R$ such that $\Ch$ is given by~\eqref{eq:Ch}.
Then $c\in\ell^1$.
\end{prop}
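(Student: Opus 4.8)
The plan is to bound each partial sum $\sum_{n=1}^N c_n$ by a fixed multiple of the operator norm $\|C\|$ and then let $N\to\infty$. The guiding observation is that the $L^2$-level data only controls $\|c\|_2$ (indeed $\|\Ch\|_2^2=\tfrac12\|c\|_2^2$), so to detect membership in $\ell^1$ one must exploit the $C^*$-algebra (operator) norm of $C$, reading it off from the actions of its truncations on the finite-dimensional spaces $(\Cpx^2)^{\otimes N}$, where the various $V_n$ can be made to contribute \emph{coherently} rather than orthogonally.

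First I would reduce to the case $c_n\ge 0$. Conjugating $C$ by a diagonal unitary $U=\bigotimes_n u_n\in R$ with $u_n=\diag(\mu_n,\nu_n)$, $|\mu_n|=|\nu_n|=1$, sends $V_n$ to $\mu_n\overline{\nu_n}V_n$ and fixes $V_m$ for $m\ne n$; choosing the phases so that $\mu_n\overline{\nu_n}c_n=|c_n|$ replaces $C$ by an operator of the same norm representing the sequence $(|c_n|)_n$, and $(|c_n|)_n\in\ell^1$ iff $c\in\ell^1$. (Alternatively one may carry the phases in the test vectors below.) Next, for fixed $N$ let $E_N\colon R\to M_2(\Cpx)^{\otimes N}$ be the trace-preserving conditional expectation onto the first $N$ tensor factors. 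On $L^2(R,\tau)$ it is the orthogonal projection onto $L^2(M_2(\Cpx)^{\otimes N})$, and since $\tr_2(e_{12})=0$ it fixes $\Vh_n$ for $n\le N$ and annihilates $\Vh_n$ for $n>N$; hence $E_N(C)=\sum_{n=1}^N c_n V_n=:C_N$. Because $E_N$ is a unital conditional expectation it is contractive, so $\|C_N\|\le\|C\|$; and as $C_N$ lies in the finite-dimensional subalgebra $M_2(\Cpx)^{\otimes N}=M_{2^N}(\Cpx)$, its norm is just its norm as a matrix acting on $(\Cpx^2)^{\otimes N}$.

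Finally I would test $C_N$ against the coherent product vector $\xi_N=\bigotimes_{n=1}^N \tfrac1{\sqrt2}(e_1+e_2)$, a unit vector in $(\Cpx^2)^{\otimes N}$. Since $V_n$ equals $e_{12}$ in the $n$-th slot, it sends $\tfrac1{\sqrt2}(e_1+e_2)$ to $\tfrac1{\sqrt2}e_1$ and acts as the identity elsewhere, so that $\langle V_n\xi_N,\xi_N\rangle=\tfrac12$ for each $n\le N$ and therefore $\langle C_N\xi_N,\xi_N\rangle=\tfrac12\sum_{n=1}^N c_n$. Combined with the previous step this gives $\tfrac12\sum_{n=1}^N c_n\le\|C_N\|\le\|C\|$ for every $N$, whence $\sum_n c_n\le 2\|C\|<\infty$ and $c\in\ell^1$.

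I expect the only genuine obstacle to be conceptual rather than technical: one must abandon the tracial $L^2$ state, under which every $V_n$ has expectation zero and only $\ell^2$ bounds emerge, in favour of the non-tracial product state $\xi_N$, under which the raising operators add up in phase and produce an $\ell^1$-sized lower bound for the norm. Everything else---contractivity of the conditional expectation $E_N$ and the identification $E_N(C)=C_N$ from the $L^2$ expansion $\Ch=\sum_n c_n\Vh_n$---is routine.
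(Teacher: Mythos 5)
Your proof is correct and is essentially the paper's own argument: reduce to $c_n\ge 0$, truncate $C$ via the trace--preserving conditional expectation $E_N$ onto $M_2(\Cpx)^{\otimes N}\cong M_{2^N}(\Cpx)$, and pair $C_N=\sum_{n\le N}c_nV_n$ against the normalized all--ones vector (your coherent product vector $\xi_N$ is exactly the paper's $x=2^{-N/2}(1,\dots,1)^t$), giving $\tfrac12\sum_{n\le N}c_n\le\|C\|$. The only slip is the claim that the diagonal unitary $U=\bigotimes_n u_n$ lies in $R$ --- the infinite tensor product of unitaries need not converge in $R$ (the paper instead invokes the induced \emph{automorphism} $V_n\mapsto\zeta_nV_n$ of $R$) --- but this is harmless since your parenthetical alternative of carrying the phases in the test vectors $\xi_N$ repairs it immediately.
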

\begin{proof}
For any sequence $(\zeta_n)_{n=1}^\infty$ of complex numbers of modulus $1$, there is an automorphism of $R$ sending
$V_n$ to $\zeta_nV_n$ for all $n$.
Thus, without loss of generality we may assume $c_n\ge0$ for all $n$.

Letting $E_n:R\to M_2(\Cpx)^{\otimes n}\otimes I\otimes I\otimes\cdots\cong M_{2^n}(\Cpx)$ be the conditional expectation onto
the tensor product of the first $n$ copies of the $2\times 2$ matrices (see Example~\ref{ex:Tucci}),
we must have $C_n:=E_n(C)=\sum_{k=1}^n c_kV_k\in M_{2^n}(\Cpx)$.
Let $x=2^{-n/2}(1,1,\ldots,1)^t$ be the normalization of the column vector of length $2^n$ with all entries equal to $1$.
Taking the usual inner product in $\Cpx^{2^n}$, we see $\langle V_kx,x\rangle=1/2$ for all $k\in\{1,\ldots,n\}$.
Thus,
\[
\frac12\sum_{k=1}^nc_k=\big|\,\langle C_n x,x\rangle\,\big|\le\|C_n\|\le\|C\|.
\]
This shows $c\in\ell^1$.
\end{proof}

Let us now investigate the series~\eqref{eq:Bop} for some sequence $b=(b_n)_{n=1}^\infty$
of complex numbers.
We claim that this series gives rise (in a weak sense explained below)
to a bounded operator if and only if $b\in\ell^1$.
Indeed,
for $K$ a finite subset of $\Nats$, we have
\[
\left\|\sum_{n\in K}b_nV_nV_n^*\right\|_{L^2(R,\tau)}^2
=\;\frac14\sum_{n\in K}|b_n|^2+\frac14\left|\sum_{n\in K}b_n\right|^2.
\]
Now suppose $K_1\subseteq K_2\subseteq\cdots$ are finite sets
whose union is all of $\Nats$.
Then $\sum_{n\in K_p}b_nV_nV_n^*$ converges in $L^2(R,\tau)$ as $p\to\infty$
if and only if $b\in\ell^2$ and $y:=\lim_{p\to\infty}\sum_{n\in K_p}b_n$ exists.
Then the limit in $L^2(R,\tau)$ is
\begin{equation}\label{eq:Bh}
\Bh=\sum_{n=1}^\infty b_n\left(V_nV_n^*-\frac12\right)^{\widehat{\;}}+\frac y2 \oneh.
\end{equation}
If there is a bounded operator $B$ such that $\Bh$ is given by~\eqref{eq:Bh},
then for every finite $F\subseteq\Nats$, the conditional expectation $E_F(B)$
of $B$ onto the (finite dimensional)
subalgebra of $R$ generated by $\{V_nV_n^*\mid n\in F\}$ will be $\sum_{n\in F}b_n(V_nV_n^*-\frac12)+\frac y2$.
Taking the projection $P=\prod_{n\in F}V_nV_n^*$, we have $E_F(B)P=\frac12(y+\sum_{n\in F}b_n)P$, so
\[
\left|\frac12\left(y+\sum_{n\in F}b_n\right)\right|\le\|E_F(B)\|\le\|B\|.
\]
As $F$ was arbitrary, this implies $b\in\ell^1$.

Suppose $b_nc_n=\frac1{n^r}$ and $b=(b_n)_1^\infty\in\ell^1$.
Letting $(b^*_n)_1^\infty$ denote the nonincreasing rearrangement of $(|b_n|)_1^\infty$, we have $b^*_n=o(\frac1n)$
and standard arguments show $c^*_n\ge\frac{K}{n^{r-1}}$ for some constant $K$.
Thus, by Proposition~\ref{prop:cl1}, Tucci's formula for writing $A=[B,C]$ does not work if $a_n=\frac1{n^r}$ for $1<r\le2$,
while of course for $r>2$ it works just fine.

\begin{ques}\label{qn:Tuccir}
Fix $1<r\leq 2$, and let
\[
A=\sum_{n=1}^\infty \frac1{n^r}V_n\in R
\]
be Tucci's quasinilpotent operator in the hyperfinite II$_1$--factor.
Do we have $A\in\Comm(R)$?
\end{ques}

\begin{bibdiv}
\begin{biblist}

\bib{AM57}{article}{
  author={Albert, A. A.},
  author={Muckenhoupt, B.},
  title={On matrices of trace zero},
  journal={Michigan Math. J.},
  volume={3},
  year={1957},
  pages={1--3}
}

\bib{A72}{article}{
  author={Apostol, Constantin},
  title={Commutators on $\ell^p$ spaces},
  journal={Rev. Roumaine Math. Pures Appl.},
  volume={17},
  year={1972},
  pages={1513--1534}
}

\bib{A73}{article}{
  author={Apostol, Constantin},
  title={Commutators on $c_0$ and $\ell^\infty$ spaces},
  journal={Rev. Roumaine Math. Pures Appl.},
  volume={18},
  year={1973},
  pages={1025--1032}
}

\bib{Ba87}{article}{
  author={Banaszczyk, Wojciech},
  title={The Steinitz constant of the plane},
  journal={J. reine angew. Math.},
  volume={373},
  year={1987},
  pages={218--220}
}

\bib{Ba90}{article}{
  author={Banaszczyk, Wojciech},
  title={A note on the Steinitz constant of the Euclidean plane},
  journal={C. R. Math. Rep. Acad. Sci. Canada},
  volume={12},
  year={1990},
  pages={97--102}
}

\bib{BP65}{article}{
  author={Brown, Arlen},
  author={Pearcy, Carl},
  title={Structure of commutators of operators},
  journal={Ann. of Math. (2)},
  volume={82},
  year={1965},
  pages={112--127}
}

\bib{BP66}{article}{
  author={Brown, Arlen},
  author={Pearcy, Carl},
  title={Commutators in factors of type III},
  journal={Canad. J. Math.},
  volume={18},
  year={1966},
  pages={1152--1160}
}

\bib{B86}{article}{
  author={Brown, Lawrence G.},
  title={Lidskii's theorem in the type II case},
  conference={
    title={Geometric methods in operator algebras},
    address={Kyoto},
    date={1983}
  },
  book={
    series={Pitman Res. Notes Math. Ser.},
    volume={123},
    publisher={Longman Sci. Tech.},
    address={Harlow},
    date={1986}
  },
  pages={1--35}
}

\bib{CD09}{article}{
  author={Collins, Beno\^it},
  author={Dykema, Ken},
  title={On a reduction procedure for Horn inequalities in finite von Neumann algebras},
  journal={Oper. Matrices},
  volume={3},
  year={2009},
  pages={1-40}
}

\bib{Co76}{article}{
  author={Connes, Alain},
  title={Classification of injective factors},
  journal={Ann. Math.},
  volume={104},
  pages={73--115},
  year={1976}
}

\bib{D09}{article}{
  author={Dosev, Detelin},
  title={Commutators on $\ell_1$},
  journal={J. Funct. Anal.},
  volume={256},
  year={2009},
  pages={3490--3509}
}

\bib{DJ10}{article}{
  author={Dosev, Detelin},
  author={Johnson, William B.},
  title={Commutators on $\ell_\infty$},
  journal={Bull. London Math. Soc.},
  volume={42},
  year={2010},
  pages={155-169}
}

\bib{DH04}{article}{
  author={Dykema, Ken},
  author={Haagerup, Uffe},
  title={Invariant subspaces of the quasinilpotent DT--operator},
  journal={J. Funct. Anal.},
  volume={209},
  year={2004},
  pages={332--366}
}

\bib{FH80}{article}{
  author={Fack, Thierry},
  author={de la Harpe, Pierre},
  title={Sommes de commutaterus dans les alg\`ebres de von Neumann finies continues},
  journal={Ann. Inst. Fourier (Grenoble)},
  volume={30},
  year={1980},
  pages={49--73}
}

\bib{GS80}{article}{
  author={Grinberg, V.S.},
  author={Sewast'janow,  S.V.},
  title={Regarding the value of Steinitz's constant},
  journal={Funktsional. Anal. i Prilozhen},
  volume={14},
  year={1980},
  pages={56--57},
  translation={
    journal={Functional Anal. and Appl.},
    volume={14},
    year={1980},
    pages={125--126}
  }
}

\bib{Had98}{article}{
  author={Hadwin, Don},
  title={Free entropy and approximate equivalence in von Neumann algebras},
  conference={
    title={Operator algebras and operator theory},
    address={Shanghai},
    date={1997}
  },
  book={
    series={Contemp. Math.}
    volume={228},
    publisher={Amer. Math. Soc.},
    address={Providence, RI},
    year={1998}
  },
  pages={111--131}
}

\bib{H69}{article}{
  author={Halpern, Herbert},
  title={Commutators in properly infinite von Neumann algebras},
  journal={Trans. Amer. Math. Soc.},
  volume={139},
  year={1969},
  pages={55-73}
}

\bib{J72}{article}{
  author={Janssen, Gerhard},
  title={Restricted ultraproducts of finite von Neumann algebras},
  conference={
    title={Contributions to non-standard analysis},
    address={Oberwolfach},
    date={1970},
  },
  book={
    series={Studies in Logic and Found. Math.}, 
    volume={69},
    publisher={North--Holland},
    address={Amsterdam},
    date={1972}
  },
  pages={101--114}
}

\bib{M06}{article}{
  author={Marcoux, Laurent},
  title={Sums of small numbers of commutators},
  journal={J. Operator Theory},
  volume={56},
  year={2006},
  pages={111--142}
}

\bib{McD70}{article}{
  author={McDuff, Dusa},
  title={Central sequences and the hyperfinite factor},
  journal={Proc. London Math. Soc. (3)},
  volume={21},
  year={1970},
  pages={443--461}
}

\bib{MW79}{article}{
  author={Murphy, Gerard J.},
  author={West, T. T.},
  title={Spectral radius forumlae},
  journal={Proc. Edinburgh Math. Soc.},
  volume={22},
  year={1979},
  pages={271--275}
}

\bib{PT69}{article}{
  author={Pearcy, Carl},
  author={Topping, David},
  journal={J. Funct. Anal.},
  title={Commutators and certain II$_1$--factors},
  volume={3},
  year={1969},
  pages={69--78}
}

\bib{P83}{book}{
  author={Petersen, Karl},
  title={Ergodic theory},
  publisher={Cambridge Univ. Press.},
  series={Cambridge studies in advanced mathematics},
  volume={2},
  year={1983}
}

\bib{S36}{article}{
  author={Shoda, Kenjiro},
  title={Einige S\"atze \"uber Matrizen},
  journal={Japanese J. Math.},
  volume={13},
  year={1936},
  pages={361--365}
}

\bib{SS08}{book}{
  author={Sinclair, Allan M.},
  author={Smith, Roger R.},
  title={Finite von Neumann algebras and masas},
  series={London Mathematical Society Lecture Note Series},
  volume={351},
  publisher={Cambridge University Press},
  address={Cambridge},
  year={2008}
}

\bib{St13}{article}{
  author={Steinitz, Ernst},
  title={Bedingt konvergente Reihen und konvexe Systeme},
  journal={J. reine angew. Math.},
  volume={143},
  year={1913},
  pages={128--175}
}

\bib{T08}{article}{
  author={Tucci, Gabriel},
  title={Some quasinilpotent generators of the hyperfinite II$_1$ factor},
  journal={J. Funct. Anal.},
  volume={254},
  year={2008},
  pages={2969--2994}
}

\bib{W54}{article}{
  author={Wright, Fred},
  title={A reduction for algebras of finite type},
  journal={Ann. of Math. (2)},
  volume={60},
  year={1954},
  pages={560--570}
}

\end{biblist}
\end{bibdiv}

\end{document}